\newcommand{\calA}{\mathcal{A}}
\newcommand{\calL}{\mathcal{L}}
\newcommand{\calT}{\mathcal{T}}
\newcommand{\scrO}{\mathscr{O}}
\newcommand{\bbN}{\mathbb{N}}
\newcommand{\bbR}{\mathbb{R}}
\newcommand{\bbZ}{\mathbb{Z}}
\newcommand{\aff}{\operatorname{aff}}
\newcommand{\Conv}{\operatorname{Conv}}
\newcommand{\Em}{\operatorname{Em}}
\newcommand{\ext}{\operatorname{ext}}
\newcommand{\Proj}{\operatorname{Proj}}
\newcommand{\relint}{\operatorname{relint}}
\newcommand{\rev}{\operatorname{rev}}
\newcommand{\Span}{\operatorname{span}}
\newcommand\defeq{\mathrel{\overset{\makebox[0pt]{\mbox{\normalfont\tiny\sffamily def}}}{=}}}
\newcommand{\sidx}[1]{\left\llbracket     #1 \right\rrbracket}
\renewcommand{\int}{\operatorname{int}}
\newcommand{\epi}{\operatorname{epi}}
\newcommand{\HRB}{\ensuremath{H^{\operatorname{br}}}}
\newcommand{\HZZ}{\ensuremath{H^{\operatorname{zz}}}}
\newcommand{\blue}[1]{{#1}}
\begin{document}

\title{A geometric way to build strong mixed-integer programming formulations}


\author{Joey Huchette\inst{1} \and Juan Pablo Vielma\inst{2}}

\institute{Rice University,~\email{joehuchette@rice.edu} \and MIT,~\email{jvielma@mit.edu}}

\maketitle

\begin{abstract}
    We give an explicit geometric way to build mixed-integer programming (MIP) formulations for unions of polyhedra. The construction is simply described in terms of spanning hyperplanes in an $r$-dimensional linear space. The resulting MIP formulation is ideal, and uses exactly $r$ integer variables and $2 \times (\# \text{ of spanning hyperplanes})$ general inequality constraints. We use this result to derive novel logarithmic-sized ideal MIP formulations for discontinuous piecewise linear functions and structures appearing in robotics and power systems problems.
\end{abstract}

\keywords{Mixed-integer programming, Formulations, Disjunctive constraints}

\section{Introduction} \label{sec:introduction}

Consider a disjunctive set $\bigcup_{i=1}^d S^i$, where each \emph{alternative} $S^i \subset \bbR^n$ is a rational polyhedra. Disjunctive constraints of the form $x \in \bigcup_{i=1}^d S^i$ abound in optimization: they are useful, for example, to model nonlinearities~\cite{Geisler:2012,Misener:2012} or discrete logic imposed by complex processes~\cite{Bergamini:2005,Trespalacios:2014b}. Therefore, we would like a way to represent these constraints in such a way that we can efficiently optimize over them. Additionally, we would like to do this in a composable way, as disjunctive constraints frequently arise as substructures in large, complex optimization problems.

Mixed-integer programming (MIP) offers one such solution. MIP formulations are useful because there are sophisticated algorithms---and corresponding high-quality software implementations---that can optimize over these representations efficiently in practice~\cite{Bixby:2007,Junger:2010}. Furthermore, combining MIP formulations for different substructures is trivial, and so this technology can be marshalled for very complex and large-scale optimization problems.

Our contribution in this work is an \emph{explicit geometric construction for strong MIP formulations of disjunctive sets.} In particular, we give a constructive geometric description for the convex hull of a particularly structured set corresponding to the ``embedding'' of a disjunctive set in a higher-dimensional space. By carefully choosing how we do this embedding, we will be able to apply the main technical result to construct small, strong MIP formulations for certain disjunctive constraints of broad interest.

\section{Preliminaries}

Although predicting the relative performance of different MIP formulations is a difficult task, two properties that tend to correlate strongly with good computational performance are \emph{strength} and \emph{small size}. A MIP formulation for $x \in \bigcup_{i=1}^d S^i \subseteq \bbR^n$ is given by its LP relaxation $R \subseteq \bbR^{n+r}$, where $\Proj_x(R \cap (\bbR^n \times \bbZ^r)) = \bigcup_{i=1}^d S^i$ is the orthogonal projection onto the $x$ variables. In this work, we focus exclusively on \emph{ideal formulations}, where the extreme points of the LP relaxation are naturally integral: $\ext(R) \subseteq \bbR^{n} \times \bbZ^{r}$. Ideal formulations are the strongest possible formulations in terms of the LP relaxation, and tend to perform very well computationally (see~\cite{Vielma:2015} for a more detailed discussion). Additionally, we will seek formulations that are small, requiring few additional variables and constraints. More concretely, we will endeavor to build formulations with few integer variables $r$ (ideally, logarithmic in $d$), and few \emph{general inequality} constraints (i.e. those not equivalent to variable bounds). \blue{We focus on general inequality constraints with since modern MIP solvers can incorporate variable bounds directly with very little overhead cost.}

\subsection{Combinatorial disjunctive constraints}

We will focus on the subclass of disjunctive constraints known as \emph{combinatorial disjunctive constraints}~\cite{Huchette:2016a}. In particular, we assume that each alternative is a face of the unit simplex $\Delta^V = \Set{\lambda \in \bbR^V_{\geq 0} | \sum_{v \in V}\lambda_v = 1}$, where $\bbR_{\geq 0}^V$ is the nonnegative orthant in $\bbR^V$. Formally, this means we consider disjunctive constraints of the form $\bigcup_{i=1}^d P(T^i)$ for some finite sets $T^i \subseteq V$, where $P(T) \defeq \Set{\lambda \in \Delta^V | \lambda_v \neq 0 \text{ only if } v \in T}$. We will use $\calT = (T^i)_{i=1}^d$ notationally as the collection of subsets corresponding to each alternative.

This class of disjunctive constraints is far more expressive than it may appear at first glance. Suppose you have an arbitrary disjunctive constraint $\bigcup_{i=1}^d S^i$, presuming for the moment that each $S^i$ is bounded. If you take each $T^i = \ext(S^i)$ as the set of extreme points of $S^i$, then
\[
    \bigcup\nolimits_{i=1}^d S^i = \Set{ \sum\nolimits_{v \in V} \lambda_v v | \exists \lambda \in \bigcup\nolimits_{i=1}^d P(T^i) }.
\]
In other words, we can adapt a formulation for $\bigcup_{i=1}^d P(T^i)$ to one for $\bigcup_{i=1}^d S^i$ by introducing auxiliary convex multiplier variables $\lambda \in \Delta^V$. This readily generalizes to the case where the alternatives $S^i$ are unbounded (provided standard representability conditions hold~\cite{Vielma:2015}); see Section~\ref{ssec:discontinuous} for an example.


\subsection{Existing techniques}
There are two standard formulation techniques which are generic in the sense that they can be applied to any combinatorial disjunctive constraint. The first is the well-known big-$M$ formulation approach; see, for example, \cite[Corollary 6.3]{Vielma:2015}. Big-$M$ formulations are simple to reason about and small, though they will, in general, fail to be ideal. A second approach applies the techniques of Jeroslow and Lowe~\cite{Jeroslow:1984} to construct ideal \emph{extended} formulations, requiring additional continuous variables: in particular, a copy of the $\lambda_v$ variables for each alternative $S^i$ and each $v \in S^i$~\cite[Corollary 5.2]{Vielma:2015}. Despite the modest increase in size of the formulation, it has been observed that the resulting formulation can perform worse than expected, potentially due to the block structure of the constraints~\cite{Vielma:2018}.

Alternatively, there exist a number of formulation techniques which are structure-dependent. While not applicable for every combinatorial disjunctive constraint, they can often be deployed to construct very small ideal formulations, the canonical example being the so-called ``logarithmic'' MIP formulations for the special ordered set of type 2 constraint~\cite{Beale:1970}, which have proven computational efficacy~\cite{Vielma:2015,Huchette:2017,Vielma:2010,Vielma:2009a}. One example of a structure-dependent technique is a combinatorial approach previously proposed by the authors which is applicable for instances where a graph associated with the extreme points of the alternatives satisfies a certain condition~\cite{Huchette:2016a}. The size of the resulting formulation is dependent on the size of a decomposition of this graph. Alternatively, there exists a geometric technique that characterizes all possible non-extended ideal formulations for the particular case of the SOS2 constraint~\cite{Huchette:2017,Vielma:2016}. The technique produces a formulation whose size is determined by the number of spanning hyperplanes induced by a particular set of vectors.

This paper builds on this geometric approach by greatly expanding its applicability. In particular, we present a technique to construct ideal MIP formulations, based on hyperplane arrangements, that can be applied to any combinatorial disjunctive constraint satisfying a mild connectivity condition.

\subsection{The embedding approach}

We construct formulations for combinatorial disjunctive constraints through the \emph{embedding approach}~\cite{Vielma:2016}, which works as follows. Assign each alternative $P(T^i)$ a unique vector $h^i \in \bbR^r$. We call such a collection of distinct vectors $H = (h^i)_{i=1}^d$ an \emph{encoding}. Given $\calT$ and $H$, embed the disjunctive constraint in a higher dimensional space as:
\[
    \Em(\calT,H) \defeq \bigcup\nolimits_{i=1}^d (P(T^i) \times \{h^i\}).
\]
If a natural geometric condition is satisfied, then the convex hull of the embedding, $Q(\calT,H) \defeq \Conv(\Em(\calT,H))$, immediately gives an ideal formulation.

\begin{definition}
    A set $H \subset \bbR^r$ is \emph{in convex position} if $\ext(\Conv(H)) = H$, it is \emph{hole-free} if $\Conv(H) \cap \bbZ^r = H$.
\end{definition}

\begin{proposition}
    If $H$ is hole-free and in convex position, then $Q(\calT,H)$ is the LP relaxation for an ideal formulation of $\bigcup_{i=1}^d P(T^i)$.
\end{proposition}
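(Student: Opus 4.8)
The plan is to check directly the two defining properties of an ideal formulation: first, that $Q(\calT,H)$ is a valid relaxation in the sense that $\Proj_\lambda\bigl(Q(\calT,H)\cap(\bbR^V\times\bbZ^r)\bigr)=\bigcup_{i=1}^d P(T^i)$; and second, that $\ext(Q(\calT,H))\subseteq\bbR^V\times\bbZ^r$.

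For idealness, I would note that $\Em(\calT,H)$ is a union of finitely many polytopes $K^i\defeq P(T^i)\times\{h^i\}$, so $Q(\calT,H)=\Conv\bigl(\bigcup_i K^i\bigr)=\Conv\bigl(\bigcup_i \ext(K^i)\bigr)$, and any extreme point of this polytope must lie in $\bigcup_i\ext(K^i)$ (an extreme point of a convex hull coincides with one of the points being combined). Since $P(T^i)$ is the face of $\Delta^V$ supported on $T^i$, its extreme points are the unit vectors $\{e_v:v\in T^i\}$, so $\ext(K^i)=\{(e_v,h^i):v\in T^i\}$. Each such point has a $0/1$ valued $\lambda$-block, and hole-freeness forces $H=\Conv(H)\cap\bbZ^r$, so in particular $H\subseteq\bbZ^r$ and the encoding block is integral. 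Hence $\ext(Q(\calT,H))\subseteq\{0,1\}^V\times\bbZ^r$, which is stronger than what is required.

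For validity, the inclusion $\supseteq$ is immediate: for each $i$ we have $P(T^i)\times\{h^i\}\subseteq\Em(\calT,H)\subseteq Q(\calT,H)$ with $h^i\in\bbZ^r$, so projecting this slab onto the $\lambda$ coordinates recovers $P(T^i)$. The reverse inclusion is the crux. Take $(\lambda,z)\in Q(\calT,H)$ with $z\in\bbZ^r$. Writing $(\lambda,z)$ as a convex combination of points of $\Em(\calT,H)$ and grouping the terms by the alternative they come from yields $(\lambda,z)=\sum_{i\in I}\mu_i(\tilde\lambda^i,h^i)$ with $\mu_i>0$, $\sum_{i\in I}\mu_i=1$, and each $\tilde\lambda^i\in P(T^i)$ (using convexity of $P(T^i)$ to aggregate several points with the same encoding). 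Looking only at the encoding block, $z=\sum_{i\in I}\mu_i h^i\in\Conv(H)$; since $z\in\bbZ^r$, hole-freeness gives $z=h^j$ for some $j$. Because $H$ is in convex position, $h^j\in\ext(\Conv(H))$, so the representation $h^j=\sum_{i\in I}\mu_i h^i$ as a convex combination of points of $H$ can only be trivial: every $h^i$ with $i\in I$ equals $h^j$, and since the encoding vectors are distinct this forces $I=\{j\}$ and $\mu_j=1$. Thus $(\lambda,z)=(\tilde\lambda^j,h^j)$ and $\lambda=\tilde\lambda^j\in P(T^j)\subseteq\bigcup_{i=1}^d P(T^i)$.

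The only genuine obstacle is this last step, where both hypotheses are used in tandem: hole-freeness is what pins an integer point's encoding coordinates to an actual member of $H$, and convex position (together with distinctness of the $h^i$) is what makes that membership propagate back to force the $\lambda$-block into a single alternative $P(T^j)$. Neither condition alone suffices, and once they are combined the rest is routine bookkeeping with convex combinations and the elementary face structure of $\Delta^V$.
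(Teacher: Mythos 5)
Your proof is correct and complete. The paper itself only cites \cite[Proposition 1]{Vielma:2016} rather than giving an argument, and what you have written is exactly the standard argument behind that result: extreme points of $\Conv\bigl(\bigcup_i K^i\bigr)$ lie among the (integral) vertices $(\mathbf{e}^v,h^i)$ of the pieces, and for validity the hole-free condition pins an integral $z$ to some $h^j\in H$ while convex position forces the convex combination to collapse onto the single alternative $j$. Nothing is missing; your observation that both hypotheses are needed precisely at that last step is the right way to see why the definition bundles them together.
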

\begin{proof}
Follows as a straightforward extension of \cite[Proposition 1]{Vielma:2016}.
\end{proof}

We present two encodings that we will make repeated use of later.

\begin{definition}
    Take the families of matrices $\left(K^s \in \{0,1\}^{2^s \times s}\right)_{s \in \bbN}$ and $\left(C^s \in \bbZ^{2^s \times s}\right)_{s \in \bbN}$ defined recursively, where $K^1 = C^1 = (0,1)^T$ and for each $s \in \bbN$,
    \[
        K^{s+1} = \begin{pmatrix} K^s & & \mathbf{0}^{2^s} \\ \rev(K^s) & & \mathbf{1}^{2^s} \end{pmatrix} \quad\text{and}\quad
        C^{s+1} = \begin{pmatrix} C^s & & \mathbf{0}^{2^s} \\C^s + \mathbf{1}^{2^s}\otimes C^s_{2^s} & & \mathbf{1}^{2^s} \end{pmatrix},
    \]
    where $\mathbf{0}^t, \mathbf{1}^t \in \{0,1\}^t$ are $t$-dimensional vectors of all zeros and ones, respectively, $\rev(A)$ reverses the rows of the matrix $A$, $u\otimes v=uv^T\in\mathbb{R}^{m\times n}$ for any $u\in \mathbb{R}^m$ and $v\in \mathbb{R}^n$, and $A_{i}$ is the $i$-th row of matrix $A$.

    For $d \in \bbN$ and $r = \lceil \log_2(d) \rceil$, the \emph{binary reflected Gray encoding} $\HRB_d \defeq (K^r_i)_{i=1}^d$ and the \emph{zig-zag encoding} $\HZZ_d \defeq (C^r_i)_{i=1}^d$ are given by the sequence of the first $d$ rows of $K^r$ and $C^r$, respectively.
\end{definition}

As a concrete example,
\[
    K^{3} = \begin{pmatrix} 0 & 0 & 0 \\
                            1 & 0 & 0 \\
                            1 & 1 & 0 \\
                            0 & 1 & 0 \\
                            0 & 1 & 1 \\
                            1 & 1 & 1 \\
                            1 & 0 & 1 \\
                            0 & 0 & 1
             \end{pmatrix} \quad \text{and} \quad
    C^{3} = \begin{pmatrix} 0 & 0 & 0 \\
                            1 & 0 & 0 \\
                            1 & 1 & 0 \\
                            2 & 1 & 0 \\
                            2 & 1 & 1 \\
                            3 & 1 & 1 \\
                            3 & 2 & 1 \\
                            4 & 2 & 1
             \end{pmatrix}.
\]

\section{The main result}

To use an embedding formulation in practice, we need an explicit outer (inequality) description of $Q(\calT,H)$. Our main technical result provides just such a description, hinging on the computation of a particular set of spanning hyperplanes. This result generalizes those of Huchette and Vielma~\cite{Vielma:2016,Huchette:2017}, which only apply to the special case where $\calT = (\{i,i+1\})_{i=1}^d$ (i.e. the SOS2 constraint of Beale and Tomlin~\cite{Beale:1970}).

To simplify our discussion, we will assume without loss of generality (w.l.o.g.) that $V = \llbracket n \rrbracket \defeq \{1,2,\ldots,n\}$.

\begin{theorem} \label{thm:general-cdc-characterization}
    Given the family of sets $\calT = (T^i \subseteq \llbracket n \rrbracket)_{i=1}^d$ and an encoding $H = (h^i)_{i=1}^d \subset \bbR^r$ in convex position, presume that $\llbracket n \rrbracket = \bigcup_{i=1}^d T^i$ and take:
    \begin{itemize}
        \item the intersection directed graph $D = \Set{(i,j) \in \sidx{d}^2 | T^i \cap T^j \neq \emptyset,\ i<j}$,
        \item the set of difference directions $C = \{c^{i,j} \defeq h^j - h^i\}_{(i,j) \in D}$,
        \item the ambient linear space $\calL = \Span(C)$,
        \item the hyperplane defined by $b$ in $\calL$: $M(b;\calL) \defeq \Set{y \in \calL | b \cdot y = 0}$, and
        \item the normal directions $\{b^k\}_{k=1}^\Gamma\subset \bbR^r \backslash \{{\bf 0}^r\}$ for the hyperplanes $\{M(b^k;\calL)\}_{k=1}^\Gamma$ spanned by $C$ in $\calL$.
    \end{itemize}
    If $\dim(\Span(C)) = \dim(H)$, then $(\lambda,z) \in Q(\calT,H)$ if and only if
    \begin{subequations} \label{eqn:general-V-formulation}
    \begin{gather}
        \sum\nolimits_{v=1}^n \min_{s : v \in T^s}\{b^k \cdot h^s\} \lambda_v \leq b^k \cdot z \leq \sum\nolimits_{v=1}^n \max_{s : v \in T^s}\{b^k \cdot h^s\} \lambda_v \quad \forall k \in \llbracket \Gamma \rrbracket \label{eqn:general-V-formulation-1} \\
        (\lambda,z) \in \Delta^{n} \times \aff(H). \label{eqn:general-V-formulation-2}
    \end{gather}
    \end{subequations}
\end{theorem}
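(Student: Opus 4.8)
The plan is the following. Write $R$ for the polyhedron defined by \eqref{eqn:general-V-formulation}. The inclusion $Q(\calT,H)\subseteq R$ is routine: any $(\lambda,z)\in Q(\calT,H)$ equals $\sum_{i=1}^{d}\alpha_i(\mu^i,h^i)$ with $\alpha\in\Delta^{d}$ and $\mu^i\in P(T^i)$, so $(\lambda,z)\in\Delta^{n}\times\Conv(H)$, giving \eqref{eqn:general-V-formulation-2}; and for each $k$, since $\mu^i_v>0$ forces $v\in T^i$ (so $s=i$ is admissible in both inner optimizations), we have $\min_{s:v\in T^s}\{b^k\cdot h^s\}\le b^k\cdot h^i\le\max_{s:v\in T^s}\{b^k\cdot h^s\}$ whenever $\mu^i_v>0$; multiplying by $\mu^i_v$, summing over $v$ using $\sum_v\mu^i_v=1$, and averaging over $i$ with weights $\alpha_i$ gives \eqref{eqn:general-V-formulation-1}. (This is the computation already used for the SOS2 case in~\cite{Vielma:2016,Huchette:2017}.)

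The work is the reverse inclusion $R\subseteq Q(\calT,H)$. First I would reduce to $\aff(H)=\calL=\bbR^r$: translating so that $h^1=\mathbf{0}$, each $c^{i,j}\in\lin(\aff(H))$, so $\Span(C)\subseteq\lin(\aff(H))$, and the hypothesis $\dim(\Span(C))=\dim(H)$ forces equality, $\calL=\Span(C)=\lin(\aff(H))$; restricting to $\calL$ loses nothing and turns \eqref{eqn:general-V-formulation-2} into $\lambda\in\Delta^{n}$. Next I would show $R$ is bounded, hence a polytope: a recession direction $(\Delta\lambda,\Delta z)$ has $\Delta\lambda=\mathbf{0}$ (from $\Delta\lambda\ge\mathbf{0}$ and $\sum_v\Delta\lambda_v=0$), so \eqref{eqn:general-V-formulation-1} forces $b^k\cdot\Delta z=0$ for all $k$; picking a basis $c^1,\dots,c^m\in C$ of $\calL$, each $\Span(\{c^j:j\ne i\})$ is a hyperplane spanned by $C$ in $\calL$, hence equals some $M(b^k;\calL)$, and $\bigcap_{i=1}^{m}\Span(\{c^j:j\ne i\})=\{\mathbf{0}\}$, so $\Delta z=\mathbf{0}$. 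Since $Q(\calT,H)\subseteq R$ and $R$ is now a polytope, it suffices to show every extreme point of $R$ lies in $\Em(\calT,H)$, i.e.\ has the form $(e_v,h^i)$ with $v\in T^i$.

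So let $(\lambda^*,z^*)$ be an extreme point of $R$; I claim there is an alternative $i^*$ with $z^*=h^{i^*}$ and $\supp(\lambda^*)\subseteq T^{i^*}$, which puts $(\lambda^*,z^*)\in P(T^{i^*})\times\{h^{i^*}\}\subseteq\Em(\calT,H)$. For $z^*\in H$: the point $z^*$ is an extreme point of the fibre $R_{\lambda^*}=\Set{z\in\bbR^r | \sum_v\min_{s:v\in T^s}\{b^k\cdot h^s\}\lambda^*_v\le b^k\cdot z\le\sum_v\max_{s:v\in T^s}\{b^k\cdot h^s\}\lambda^*_v \text{ for } k=1,\dots,\Gamma}$, since otherwise $(\lambda^*,z^*)$ would be a midpoint of two points of $R$ with the same $\lambda$-coordinate; combining this with the facts that $H$ is in convex position (so each $h^i\in\ext(\Conv(H))$) and that the $b^k$ are exactly the normals of the arrangement generated by $C$, one forces $z^*$ to be one of the $h^i$. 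For $\supp(\lambda^*)\subseteq T^{i^*}$: assume $z^*=h^{i^*}$, write $u^k(\lambda)$ for the right-hand side of \eqref{eqn:general-V-formulation-1}, and use $\sum_v\lambda^*_v=1$ to get
\[
    u^k(\lambda^*)-b^k\cdot h^{i^*}=\sum_v\Bigl(\max_{s:v\in T^s}\{b^k\cdot h^s\}-b^k\cdot h^{i^*}\Bigr)\lambda^*_v .
\]
The summands with $v\in T^{i^*}$ vanish as soon as $b^k\cdot h^{i^*}\ge b^k\cdot h^s$ for every $s$ with $T^s\cap T^{i^*}\ne\emptyset$ (whose differences with $h^{i^*}$ lie in $C$), while a summand at $v\in\supp(\lambda^*)\setminus T^{i^*}$ is strictly negative once $b^k\cdot h^{i^*}>b^k\cdot h^s$ for every $s$ with $v\in T^s$; producing a spanning hyperplane $M(b^k;\calL)$ realizing both conditions makes $b^k\cdot z^*>u^k(\lambda^*)$, contradicting $(\lambda^*,z^*)\in R$. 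Hence no $v\in\supp(\lambda^*)\setminus T^{i^*}$ exists.

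The main obstacle is this ``collapsing'' step --- showing that an extreme point of $R$ must lie over a single alternative --- and in particular the construction, from the sign patterns of the differences in $C$ relative to the spanning hyperplanes $\{M(b^k;\calL)\}_{k=1}^{\Gamma}$, of the separating hyperplanes needed above together with the argument that $z^*\in H$. This is precisely where convex position of $H$ and the exact choice of spanning hyperplanes enter, generalizing the explicit hyperplane bookkeeping carried out for SOS2 in~\cite{Huchette:2017,Vielma:2016}. Finally, the degenerate case $d=1$ is immediate (then $C=\emptyset$, $\Gamma=0$, and $R=\Delta^{n}\times\{h^1\}=P(T^1)\times\{h^1\}$ since $\bigcup_iT^i=\sidx{n}$), and $\dim(\calL)=0$ with $d>1$ cannot occur because the $h^i$ are distinct.
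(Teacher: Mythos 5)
Your overall strategy differs from the paper's: you aim to show that every extreme point of the relaxation $R$ defined by \eqref{eqn:general-V-formulation} lies in $\Em(\calT,H)$, whereas the paper argues in the opposite direction, classifying the facets of $Q(\calT,H)$ and showing each is either a variable bound subsumed by \eqref{eqn:general-V-formulation-2} or one of the inequalities \eqref{eqn:general-V-formulation-1}, via a case analysis on $\dim(\Span(\tilde{C}))$ for the set $\tilde{C}$ of difference directions orthogonal to a given facet normal, with a two-dimensional ``tilting'' argument ruling out the deficient case. Either direction could in principle work, and your preliminary reductions (validity of \eqref{eqn:general-V-formulation} for $Q(\calT,H)$, reduction to $\aff(H)=\calL$, boundedness of $R$ via a basis drawn from $C$) are sound.

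However, the proposal has a genuine gap exactly where you flag ``the main obstacle'': the collapsing step is asserted rather than proved, and it is the entire content of the theorem. Two specific claims are missing. First, you claim that $z^*$ being an extreme point of the fibre $R_{\lambda^*}$, together with convex position of $H$, forces $z^*\in H$; but the fibre is an intersection of slabs with normals $b^k$, and even for $\lambda^*={\bf e}^v$ its vertices lie in $\Set{h^s | v\in T^s}$ only if $\Conv(\Set{h^s | v\in T^s})$ is cut out by inequalities whose normals are among the $b^k$ --- a nontrivial property of the spanning-hyperplane arrangement that is nowhere established. Second, for $v\in\supp(\lambda^*)\setminus T^{i^*}$ you need a single normal $b^k$ of a hyperplane spanned by $C$ satisfying simultaneously $b^k\cdot h^{i^*}\geq b^k\cdot h^s$ for every $s$ with $T^s\cap T^{i^*}\neq\emptyset$ and $b^k\cdot h^{i^*}> b^k\cdot h^s$ for every $s$ with $v\in T^s$; convex position guarantees the existence of some separating direction, but not one that is a spanning-hyperplane normal, and it is not clear such a $b^k$ exists in general (one would at least need to decompose a generic separator into the $b^k$ and control the resulting signs). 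Until these two constructions are carried out --- this is precisely the work that the paper's facet classification and tilting argument perform --- the proof is incomplete.
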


We defer the proof of the result to Appendix~\ref{app:proof}. We can present a straightforward sufficient condition that ensures that the dimensionality conditions of Theorem~\ref{thm:general-cdc-characterization} are satisfied. Notationally, take $[r]^2 \defeq \Set{\{i,j\} | i, j \in \llbracket r \rrbracket,\: i \neq j}$.

\begin{proposition} \label{prop:sufficient-condition}
If the directed graph $G=(\llbracket d \rrbracket,D)$ is (weakly) connected, then $\dim(\Span(C)) = \dim(H)$.
\end{proposition}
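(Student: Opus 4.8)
The plan is to relate the dimension of $\Span(C)$ to the rank of an incidence-like structure of the graph $G$, and then use connectivity. First I would observe that $\dim(H) = \dim(\aff(H)) = \dim(\aff(H) - h^1)$, and that $\aff(H) - h^1 = \Span\Set{h^i - h^1 | i \in \sidx{d}}$. So it suffices to show that $\Span(C) \supseteq \Span\Set{h^i - h^1 | i \in \sidx{d}}$; the reverse containment is immediate since every $c^{i,j} = h^j - h^i = (h^j - h^1) - (h^i - h^1)$ already lies in that span, which also shows $\dim(\Span(C)) \le \dim(H)$ always.

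The key step is a telescoping argument along paths in $G$. Fix any $i \in \sidx{d}$. Since $G$ is weakly connected, there is an undirected path from vertex $1$ to vertex $i$, say $1 = v_0, v_1, \ldots, v_m = i$, where each consecutive pair $\{v_{\ell-1}, v_\ell\}$ is an edge of $D$ in one orientation or the other. For each such edge, the corresponding difference direction $\pm(h^{v_\ell} - h^{v_{\ell-1}})$ belongs to $C$ (up to sign, which does not affect the span). Summing telescopes: $h^i - h^1 = \sum_{\ell=1}^m (h^{v_\ell} - h^{v_{\ell-1}}) \in \Span(C)$. Since $i$ was arbitrary, $\Span\Set{h^i - h^1 | i \in \sidx{d}} \subseteq \Span(C)$, and combined with the previous paragraph we get equality of spans, hence $\dim(\Span(C)) = \dim(H)$.

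There is essentially no serious obstacle here; the only thing to be careful about is the bookkeeping with edge orientations (an edge of $D$ is an ordered pair $(i,j)$ with $i<j$, but a weakly connected path may traverse it either way), which is handled by noting $\Span(C)$ is closed under negation. One should also record the trivial base case $d = 1$ (or $H$ a single point), where both dimensions are $0$ and $C = \emptyset$, so $\Span(C) = \{\mathbf{0}^r\}$ by convention and the statement holds vacuously.
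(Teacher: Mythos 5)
Your proof is correct and follows the same overall strategy as the paper's: both reduce the claim to showing that $\Span(C)$ equals the linear space $\aff(H)-h^1$ parallel to the affine hull of the codes, and both use connectivity of $G$ to telescope edge differences along paths. Your execution is somewhat cleaner. By first identifying $\aff(H)-h^1$ with $\Span\{h^i-h^1 : i\in\llbracket d\rrbracket\}$, the containment $\Span(C)\subseteq\aff(H)-h^1$ becomes the one-line observation $c^{i,j}=(h^j-h^1)-(h^i-h^1)$, whereas the paper verifies the affine-multiplier condition directly; and for the reverse containment you telescope vertex by vertex along an undirected path from $1$ to each $i$, whereas the paper takes a single closed directed walk visiting every node and carries out a more delicate computation with multipliers scaled by traversal counts. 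Both arguments are valid; yours avoids the multiplier bookkeeping, and you correctly note the two points that need care, namely that weak connectivity only gives edges up to orientation (handled since $\Span(C)$ is closed under negation) and the degenerate case $C=\emptyset$.
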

We defer the proof of the result to Appendix~\ref{app:sufficient-condition}.

\section{Applications of the main result}

\subsection{Univariate piecewise linear functions (continuous or discontinuous)} \label{ssec:discontinuous}
We can apply Theorem~\ref{thm:general-cdc-characterization} to construct small, strong MIP formulations for univariate piecewise linear functions that have a sufficiently long continuous sub-piece. This application includes as special cases the existing best-of-breed MIP formulations for continuous piecewise linear functions (the ``logarithmic'' formulation of Vielma and Nemhauser~\cite{Vielma:2016,Vielma:2009a} and the ``zig-zag'' formulation of Huchette and Vielma~\cite{Huchette:2017}), and improves upon standard formulations described in~\cite{Vielma:2010} for discontinuous piecewise linear functions.

Consider a lower semi-continuous univariate piecewise linear function
\[
    f(x) = \begin{cases}
        a_1 x + b_1 & t_1 \leq x \leq t_2 \\
        \vdots \\
        a_d x + b_d & t_d < x \leq t_{d+1}.
    \end{cases}
\]
We can model its epigraph as a union of polyhedra:
\[
    \epi(f) = \bigcup\nolimits_{i=1}^d \Set{ (x,a_ix+b_i+\gamma) | t_i \leq x \leq t_{i+1},\; \gamma \geq 0 }.
\]
To formulate this set, \cite{Vielma:2010} duplicates the $\lambda$ variables for each interior breakpoint $\{t_i\}_{i=2}^d$. If $r(i)=\lfloor 1+ i/2\rfloor$ and $s(i)=\lceil i/2\rceil$, this takes the form
\begin{equation}\label{olddiscont}
\epi(f)=\sum\nolimits_{i=1}^{2d}\lambda_i\left(t_{r(i)},a_{s(i)}t_{r(i)}+b_{s(i)}+\gamma\right),\; \gamma \geq 0, \; \lambda\in \Delta^{2d},
\text{ $\lambda$ is SOS2},
\end{equation}
where we have constrained $\lambda$ to satisfy the standard SOS2 constraint on $2d$ breakpoints. However, this approach is inefficient as it unnecessarily duplicates breakpoints where $f$ is continuous (i.e. those $i$ where $a_it_{i+1} + b_i = a_{i+1}t_{i+1} + b_{i+1}$). We can remove this redundancy by considering the family of sets
\[
    T^i \defeq \{(t_i,a_it_i+b_i), (t_{i+1},a_it_{i+1}+b_i)\} \quad \forall i \in \llbracket d \rrbracket.
\]
We can then identify the ground set $\bigcup_{i=1}^d T^i$ with $V = \llbracket d+1+\kappa \rrbracket$, where $\kappa$ is the number of discontinuous breakpoints, i.e. those $i \in \llbracket d \rrbracket$ where $a_it_{i+1} + b_i \neq a_{i+1}t_{i+1} + b_{i+1}$. \blue{The mapping between $\bigcup_{i=1}^d T^i$ and $V$ is the natural extension of that in \eqref{olddiscont} where all breakpoints are duplicated.} Provided that $f$ has a sufficiently long continuous sub-piece, we can construct a logarithmically-sized ideal MIP formulation for $\epi(f)$.

\begin{proposition} \label{prop:pwl}
Take $d = 2^r$ for some integer $r \geq 2$, and assume that $f$ is continuous on the interval $[t_{d/4+1},t_{d/2+1}]$, inclusive of the endpoints. Select an encoding $H=(h^i)_{i=1}^d$ that is either $\HRB_d$ or $\HZZ_d$, and take $h^0 \equiv h^1$ and $h^{d+1} \equiv h^d$ for notational simplicity. Then $(\lambda,z) \in Q(\calT,H)$ if and only if
    \begin{subequations} \label{eqn:discontinuous-pwl}
    \begin{gather}
        \sum\nolimits_{v=1}^{d+1+\kappa} \lambda_v\min_{s : v \in T^s}h^{s}_k \leq z_k \leq \sum\nolimits_{v=1}^{d+1+\kappa} \lambda_v\max_{s : v \in T^s}h^s_k \quad \forall k \in \llbracket r \rrbracket \\
        (\lambda,z) \in \Delta^{d+1+\kappa} \times \bbR^r.
    \end{gather}
    \end{subequations}
    \blue{Moreover, a directly analogous result can be obtained with continuity on the interval $[t_{d/2+1},t_{3d/4+1}]$.}
\end{proposition}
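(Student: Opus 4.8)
The plan is to invoke Theorem~\ref{thm:general-cdc-characterization} with the sets $\calT = (T^i)_{i=1}^d$, the encoding $H$, and $n = d+1+\kappa$, and then simplify the resulting inequality system~\eqref{eqn:general-V-formulation} down to the advertised form~\eqref{eqn:discontinuous-pwl}. Three hypotheses must be verified: that $\bigcup_{i=1}^d T^i = \llbracket d+1+\kappa \rrbracket$ (immediate from the way $V$ was defined), that $H$ is in convex position (a known property of $\HRB_d$ and $\HZZ_d$; see~\cite{Vielma:2016,Huchette:2017}), and the dimensionality condition $\dim(\Span(C)) = \dim(H)$. The bulk of the work is establishing this last condition, together with the observation that, for these two encodings, the \emph{only} hyperplanes spanned by $C$ are the $r$ coordinate hyperplanes, which is what collapses the general formulation to an $r$-inequality-pair system.

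First I would pin down the intersection graph $D$. Since the $x$-coordinates appearing in $T^i$ are exactly $t_i$ and $t_{i+1}$, and the $t_j$ are distinct, $T^i \cap T^j \neq \emptyset$ can hold only when $j = i+1$, in which case the common point, if any, is the one with $x$-coordinate $t_{i+1}$; this point is shared precisely when $f$ is continuous at $t_{i+1}$. Hence $D = \Set{(i,i+1) | f \text{ is continuous at } t_{i+1}}$, and the continuity hypothesis (continuity at each of the breakpoints $t_{d/4+1},\ldots,t_{d/2+1}$, endpoints included) puts all edges $(i,i+1)$ with $i \in \{d/4,\ldots,d/2\}$ into $D$. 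Next I would exploit the structure of the encodings: the recursions defining $K^s$ and $C^s$ imply that consecutive rows differ in exactly one coordinate, and that this coordinate is $\nu_2(i)+1$ (the ``ruler'' sequence), where $\nu_2$ denotes the $2$-adic valuation. Thus $c^{i,i+1} = h^{i+1}-h^i = \pm e_{\nu_2(i)+1}$, so in particular $C \subseteq \{\pm e_1,\ldots,\pm e_r\}$.

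With $d = 2^r$, the window $i \in \{2^{r-2},\ldots,2^{r-1}\}$ yields $\nu_2(2^{r-2})+1 = r-1$, $\nu_2(2^{r-1})+1 = r$, and, for $2^{r-2} < i < 2^{r-1}$, $\nu_2(i) = \nu_2(i - 2^{r-2})$ runs over $\{0,\ldots,r-3\}$ as $i - 2^{r-2}$ runs over $\{1,\ldots,2^{r-2}-1\}$ (for $r \geq 3$; for $r = 2$ one checks the two values $\{1,2\}$ directly). Either way $\{\nu_2(i)+1 : d/4 \leq i \leq d/2\} = \llbracket r \rrbracket$, so $\{c^{i,i+1}\}_{i=d/4}^{d/2}$ contains $\pm e_k$ for every $k$ and hence $\Span(C) = \bbR^r$. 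The same difference vectors lie in the direction space of $\aff(H)$, so $\dim(H) = r = \dim(\Span(C))$ and the hypothesis of Theorem~\ref{thm:general-cdc-characterization} holds with $\calL = \bbR^r$; note that since $D$ need not make $G=(\llbracket d\rrbracket,D)$ connected, Proposition~\ref{prop:sufficient-condition} is not available and this verification is done directly.

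It remains to read~\eqref{eqn:discontinuous-pwl} off of~\eqref{eqn:general-V-formulation}. Since $\calL = \bbR^r$ and every element of $C$ is a signed coordinate vector with all $r$ coordinate directions represented, a codimension-one subspace of $\calL$ is spanned by a subset of $C$ exactly when it is a coordinate hyperplane $\Set{y | y_k = 0}$; hence $\Gamma = r$ and one may take $b^k = e_k$. Substituting $b^k = e_k$ turns~\eqref{eqn:general-V-formulation-1} into the pair of inequalities in~\eqref{eqn:discontinuous-pwl} verbatim (the $\min$ and $\max$ run over the at most two pieces containing a given $v$, with the conventions $h^0 \equiv h^1$, $h^{d+1}\equiv h^d$ covering the endpoint breakpoints), and~\eqref{eqn:general-V-formulation-2} becomes $\Delta^{d+1+\kappa}\times\bbR^r$ because $\aff(H)=\bbR^r$. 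The ``moreover'' follows by the symmetric argument: continuity on $[t_{d/2+1},t_{3d/4+1}]$ puts the edges $(i,i+1)$ for $d/2 \leq i \leq 3d/4$ into $D$, and the ruler sequence again exhausts $\llbracket r \rrbracket$ over $i \in \{2^{r-1},\ldots,3\cdot 2^{r-2}\}$. The main obstacle here is not any single deep step but correctly handling the combinatorics of the transition (ruler) sequence on the relevant window of indices, together with the reduction from potentially many spanning hyperplanes down to the $r$ coordinate ones.
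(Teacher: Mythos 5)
Your proposal is correct and follows the same route as the paper: both reduce to Theorem~\ref{thm:general-cdc-characterization} by observing that the continuity window forces $C$ to contain a signed copy of every ${\bf e}^k$ while $C \subseteq \{\pm{\bf e}^k\}_{k=1}^r$, so the only spanned hyperplanes are the coordinate ones and $b^k = {\bf e}^k$ suffices. The paper states this as a one-line ``inspection of the recursive definitions''; you simply carry out that inspection explicitly via the ruler/2-adic-valuation description of the transition coordinates, which is a faithful (and welcome) elaboration rather than a different argument.
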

\begin{proof}
The result follows by inspection of the recursive definitions of $\HRB_d$ and $\HZZ_d$, seeing that under the partial continuity assumption that $C \supseteq \{{\bf e}^k\}_{k=1}^r$, and so $C$ has dimension $r$. Furthermore, as $C \subseteq \{\pm {\bf e}^k\}_{k=1}^r$, we can apply Theorem~\ref{thm:general-cdc-characterization} using the normal directions $\{b^k = {\bf e}^k\}_{k=1}^r$, giving the result.
\end{proof}
Applying one of the existing logarithmic formulations for the SOS2 constraint in \eqref{olddiscont} yields a formulation with the same number of binary variables and general inequalities as \eqref{eqn:discontinuous-pwl}. However, the formulation based on \eqref{olddiscont}  will have $2d$ continuous variables even if the number of discontinuous breakpoints $\kappa$ is small or even zero In contrast, we observe that formulation \eqref{eqn:discontinuous-pwl} uses only $d+1+\kappa$ continuous variables. A similar favorable accounting occurs when comparing \eqref{eqn:discontinuous-pwl} with the ``disaggregated logarithmic'' formulation described in \cite{Vielma:2010}. Additionally, when $\kappa=0$, \eqref{eqn:discontinuous-pwl} is equivalent to the logarithmic formulations for continuous functions from \cite{Huchette:2017,Vielma:2009a,Vielma:2016}.

\subsection{Relaxations of the annulus}

The annulus is a set in the plane $\calA = \Set{x \in \bbR^2 | L \leq ||x||_2 \leq U}$ for constants $L,U \in \bbR_{\geq 0}$; see the left side of Figure~\ref{fig:annulus} for an illustration. A constraint of the form $x \in \calA$ might arise when modeling a complex number $z = x_1 + x_2 \mathbf{i}$, as $x \in \calA$ bounds the magnitude of $z$ as $L \leq |z| \leq U$. Such constraints arise in power systems optimization: for example, in the ``rectangular formulation''~\cite{Kocuk:2015} and the second-order cone reformulation~\cite{Jabr:2006,Liu:2017} of the optimal power flow problem and its voltage stability-constrained variant~\cite{Cui:2017}, and the reactive power dispatch problem~\cite{Foster:2013}. Another application is footstep planning in robotics~\cite{Deits:2014,Kuindersma:2016}, where $L=U=1$, $x = (\cos(\theta),\sin(\theta))$, and $x$ must satisfy the trigonometric identity $x_1^2 + x_2^2 = 1$.

When $0 < L \leq U$, $\calA$ is a nonconvex set. Moreover, the annulus is not \emph{mixed-integer convex representable}~\cite{Lubin:2017,Lubin:2017a}: that is, there do not exist mixed-integer formulations for the annulus even if we allow the relaxation $R$ to be an arbitrary convex set. Foster~\cite{Foster:2013} proposes a disjunctive relaxation for the annulus given as $\hat{\calA} \defeq \bigcup_{i=1}^d S^i$, where each
\begin{equation} \label{eqn:annulus-pieces}
    S^i = \Conv\left(\left\{v^{2i+s-4}\right\}_{s=1}^4\right) \quad \forall i \in \llbracket d \rrbracket
\end{equation}
is a quadrilateral whose extreme points are
\begin{alignat*}{3}
    v^{2i-1} &= \left(L\cos\left(\frac{2\pi i}{d}\right), L\sin\left(\frac{2\pi i}{d}\right)\right) \quad\quad &\forall i \in \llbracket d \rrbracket& \\
    v^{2i} &= \left(U\sec\left(\frac{2\pi }{d}\right)\cos\left(\frac{2\pi i}{d}\right), U\sec\left(\frac{2\pi }{d}\right)\sin\left(\frac{2\pi i}{d}\right)\right) \quad\quad &\forall i \in \llbracket d \rrbracket&,
\end{alignat*}
taking $v^{0} \equiv v^{2d}$ and $v^{-1} \equiv v^{2d-1}$ for notational simplicity. We can in turn represent this disjunctive relaxation via the family of sets $\calT = (T^i = \{2i+s-4\}_{s=1}^4)_{i=1}^d$; see the right side of Figure~\ref{fig:annulus} for an illustration.

We start by applying Theorem~\ref{thm:general-cdc-characterization} to construct an ideal MIP formulation with $\lceil \log_2(d) \rceil$ integer variables and $2\lceil \log_2(d) \rceil$ inequality constraints.

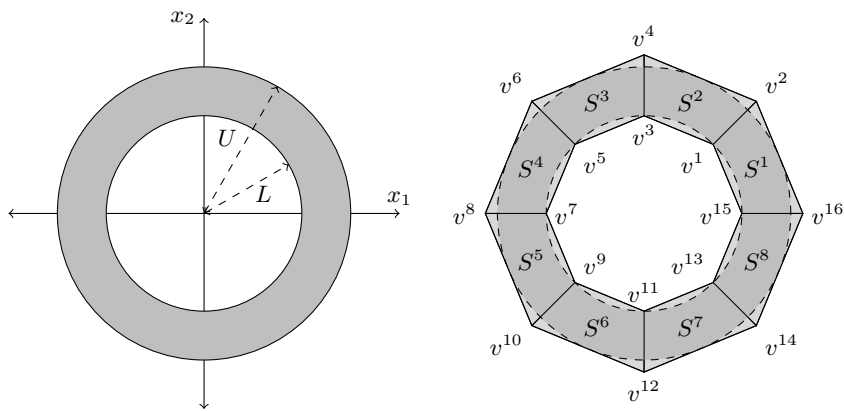
\begin{figure}[htpb]
    \centering
    \begin{tikzpicture}[scale=.65]

        \draw [<->] (-4,0) -- (4,0);
        \draw [<->] (0,-4) -- (0,4);
        \node [left] at (0,4) {$x_2$};
        \node [above] at (4,0) {$x_1$};
        \draw[fill=gray!50,even odd rule]  circle (3) circle (2);

        \draw [<->,dashed] (0,0) -- (1.7320508075688774,1);
        \draw [<->,dashed] (0,0) -- (1.5,2.5980762113533156);

        \node [right] at (1.7320508075688774/2,1/2-0.1) {$L$};
        \node [above left] at (1.5/2+0.1,2.5980762113533156/2-0.1) {$U$};

    \begin{scope}[shift={(9,0)}]
        \draw [fill=gray!30] (1.4142135623730951,1.414213562373095) -- (2.0,0.0) -- (3.2471766008771823,0.0) -- (2.296100594190539,2.2961005941905386);
        \draw [fill=gray!30] (1.2246467991473532e-16,2.0) -- (1.4142135623730951,1.414213562373095) -- (2.296100594190539,2.2961005941905386) -- (1.988322215265212e-16,3.2471766008771823);
        \draw [fill=gray!30] (-1.414213562373095,1.4142135623730951) -- (1.2246467991473532e-16,2.0) -- (1.988322215265212e-16,3.2471766008771823) -- (-2.2961005941905386,2.296100594190539);
        \draw [fill=gray!30] (-2.0,2.4492935982947064e-16) -- (-1.414213562373095,1.4142135623730951) -- (-2.2961005941905386,2.296100594190539) -- (-3.2471766008771823,3.976644430530424e-16);
        \draw [fill=gray!30] (-1.4142135623730954,-1.414213562373095) -- (-2.0,2.4492935982947064e-16) -- (-3.2471766008771823,3.976644430530424e-16) -- (-2.2961005941905395,-2.2961005941905386);
        \draw [fill=gray!30] (-3.6739403974420594e-16,-2.0) -- (-1.4142135623730954,-1.414213562373095) -- (-2.2961005941905395,-2.2961005941905386) -- (-5.964966645795635e-16,-3.2471766008771823);
        \draw [fill=gray!30] (1.4142135623730947,-1.4142135623730954) -- (-3.6739403974420594e-16,-2.0) -- (-5.964966645795635e-16,-3.2471766008771823) -- (2.296100594190538,-2.2961005941905395);
        \draw [fill=gray!30] (2.0,-4.898587196589413e-16) -- (1.4142135623730947,-1.4142135623730954) -- (2.296100594190538,-2.2961005941905395) -- (3.2471766008771823,-7.953288861060848e-16);
        \draw[fill=gray!50,even odd rule,dashed]  circle (3) circle (2);

        \node at (2.309698831278217, 0.9567085809127245) {$S^1$};
        \node at (0.9567085809127246, 2.309698831278217) {$S^2$};
        \node at (-0.9567085809127243, 2.309698831278217) {$S^3$};
        \node at (-2.309698831278217, 0.9567085809127247) {$S^4$};
        \node at (-2.309698831278217, -0.9567085809127241) {$S^5$};
        \node at (-0.9567085809127258, -2.309698831278216) {$S^6$};
        \node at (0.956708580912725, -2.3096988312782165) {$S^7$};
        \node at (2.309698831278216, -0.956708580912726) {$S^8$};

        \draw (1.4142135623730951,1.414213562373095) -- (2.0,0.0) -- (3.2471766008771823,0.0) -- (2.296100594190539,2.2961005941905386);
        \draw (1.2246467991473532e-16,2.0) -- (1.4142135623730951,1.414213562373095) -- (2.296100594190539,2.2961005941905386) -- (1.988322215265212e-16,3.2471766008771823);
        \draw (-1.414213562373095,1.4142135623730951) -- (1.2246467991473532e-16,2.0) -- (1.988322215265212e-16,3.2471766008771823) -- (-2.2961005941905386,2.296100594190539);
        \draw (-2.0,2.4492935982947064e-16) -- (-1.414213562373095,1.4142135623730951) -- (-2.2961005941905386,2.296100594190539) -- (-3.2471766008771823,3.976644430530424e-16);
        \draw (-1.4142135623730954,-1.414213562373095) -- (-2.0,2.4492935982947064e-16) -- (-3.2471766008771823,3.976644430530424e-16) -- (-2.2961005941905395,-2.2961005941905386);
        \draw (-3.6739403974420594e-16,-2.0) -- (-1.4142135623730954,-1.414213562373095) -- (-2.2961005941905395,-2.2961005941905386) -- (-5.964966645795635e-16,-3.2471766008771823);
        \draw (1.4142135623730947,-1.4142135623730954) -- (-3.6739403974420594e-16,-2.0) -- (-5.964966645795635e-16,-3.2471766008771823) -- (2.296100594190538,-2.2961005941905395);
        \draw (2.0,-4.898587196589413e-16) -- (1.4142135623730947,-1.4142135623730954) -- (2.296100594190538,-2.2961005941905395) -- (3.2471766008771823,-7.953288861060848e-16);

        \node [left] at (2.0,0.0) {$v^{15}$};
        \node [right] at (3.2471766008771823,0.0) {$v^{16}$};
        \node [below left] at (1.4142135623730951,1.414213562373095) {$v^{1}$};
        \node [above right] at (2.296100594190539,2.2961005941905386) {$v^{2}$};
        \node [below] at (1.2246467991473532e-16,2.0) {$v^{3}$};
        \node [above] at (1.988322215265212e-16,3.2471766008771823) {$v^{4}$};
        \node [below right] at (-1.414213562373095,1.4142135623730951) {$v^{5}$};
        \node [above left] at (-2.2961005941905386,2.296100594190539) {$v^{6}$};
        \node [right] at (-2,0) {$v^{7}$};
        \node [left] at (-3.2471766008771823,0.0) {$v^{8}$};
        \node [above right] at (-1.414213562373095,-1.4142135623730951) {$v^{9}$};
        \node [below left] at (-2.2961005941905386,-2.296100594190539) {$v^{10}$};
        \node [above] at (-1.2246467991473532e-16,-2.0) {$v^{11}$};
        \node [below] at (-1.988322215265212e-16,-3.2471766008771823) {$v^{12}$};
        \node [above left] at (1.4142135623730951,-1.414213562373095) {$v^{13}$};
        \node [below right] at (2.296100594190539,-2.2961005941905386) {$v^{14}$};

    \end{scope}
    \end{tikzpicture}
    \caption{(\textbf{Left}) The annulus $\calA$ and (\textbf{Right}) its corresponding quadrilateral relaxation $\hat{\calA}$ given by \eqref{eqn:annulus-pieces} with $d=8$.}
    \label{fig:annulus}
\end{figure}

\begin{proposition}\label{prop:log-annulus}
    Fix $d = 2^r$ for some $r \in \bbN$. Take the binary reflected Gray encoding $\HRB_d = (h^i)_{i=1}^d$, along with $h^0 \equiv h^d$ for notational convenience. Then $(\lambda,z) \in Q(\calT,\HRB_d)$ if and only if
    \begin{subequations} \label{eqn:annulus-log-form}
    \begin{gather}
        \sum_{i=1}^d \min\{h^{i-1}_k,h^i_k\} (\lambda_{2i-1} + \lambda_{2i}) \leq z_k \leq \sum_{i=1}^d \max\{h^{i-1}_k,h^i_k\} (\lambda_{2i-1} + \lambda_{2i}) \quad \forall k \in \llbracket r \rrbracket \\
        (\lambda,z) \in \Delta^{2d} \times \bbR^r.
    \end{gather}
    \end{subequations}
\end{proposition}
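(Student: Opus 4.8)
The plan is to derive Proposition~\ref{prop:log-annulus} as a direct specialization of Theorem~\ref{thm:general-cdc-characterization}, mirroring the structure of the proof of Proposition~\ref{prop:pwl}. First I would verify the hypotheses of the theorem: the ground set condition $\llbracket n\rrbracket = \bigcup_{i=1}^d T^i$ holds by construction since $n = 2d$ and every index $v\in\llbracket 2d\rrbracket$ lies in some $T^i$ (indeed in exactly two consecutive sets, using the cyclic wraparound $v^0\equiv v^{2d}$, $v^{-1}\equiv v^{2d-1}$); and $\HRB_d$ is in convex position (it is hole-free and in convex position as the vertex set of a combinatorial structure — this is standard for the reflected Gray code embedding, or cite that $\HRB_d$ consists of distinct $0/1$ vectors which are trivially in convex position). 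Then I would identify the intersection directed graph $D$: since $T^i\cap T^{i+1}\neq\emptyset$ for every cyclically-consecutive pair (they share two vertices), $G=(\llbracket d\rrbracket,D)$ is connected, so by Proposition~\ref{prop:sufficient-condition} the dimensionality condition $\dim(\Span(C))=\dim(H)$ is satisfied and Theorem~\ref{thm:general-cdc-characterization} applies.

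Next I would compute the difference directions and the spanning hyperplanes. The key observation, exactly as in Proposition~\ref{prop:pwl}, is that for the binary reflected Gray code $\HRB_d = (K^r_i)_{i=1}^d$, consecutive rows differ in exactly one coordinate, so $C \supseteq \{h^{i+1}-h^i\}$ consists of vectors of the form $\pm\mathbf{e}^k$; one then checks from the recursive definition of $K^r$ that all $r$ unit directions $\mathbf{e}^1,\dots,\mathbf{e}^r$ appear (so $\dim\Span(C)=r=\dim(H)$, consistent with the above), and that $C\subseteq\{\pm\mathbf{e}^k\}_{k=1}^r$. Consequently $\calL = \bbR^r$ and the hyperplanes spanned by $C$ in $\calL$ are precisely the coordinate hyperplanes, with normal directions $\{b^k = \mathbf{e}^k\}_{k=1}^r$; moreover $\aff(H) = \bbR^r$ since $H$ affinely spans $\bbR^r$. (One should double-check that no further hyperplanes are spanned — with $C\subseteq\{\pm\mathbf{e}^k\}$ and $\dim\calL = r$, the only hyperplanes spanned by $r$-subsets of these $\pm\mathbf{e}^k$ are the $r$ coordinate hyperplanes, so $\Gamma = r$.)

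Finally I would substitute $b^k = \mathbf{e}^k$ into \eqref{eqn:general-V-formulation}. With $b^k\cdot h^s = h^s_k$, the general constraint \eqref{eqn:general-V-formulation-1} becomes
\[
\sum_{v=1}^{2d} \min_{s: v\in T^s}\{h^s_k\}\,\lambda_v \;\leq\; z_k \;\leq\; \sum_{v=1}^{2d} \max_{s: v\in T^s}\{h^s_k\}\,\lambda_v \qquad \forall k\in\llbracket r\rrbracket,
\]
and \eqref{eqn:general-V-formulation-2} becomes $(\lambda,z)\in\Delta^{2d}\times\bbR^r$. It then remains to rewrite the coefficient sums in the ``aggregated'' form stated in \eqref{eqn:annulus-log-form}. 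Here I would use the structure $T^i = \{2i-3,2i-2,2i-1,2i\}$ (with cyclic indexing), so that each index $2i-1$ and each index $2i$ belongs exactly to $T^i$ and $T^{i+1}$ (again cyclically, with $h^0\equiv h^d$ absorbing the wraparound at $i=d$). Therefore $\min_{s: 2i-1\in T^s}\{h^s_k\} = \min\{h^i_k, h^{i+1}_k\}$ — wait, one must be careful with the index bookkeeping: I would check that $2i-1\in T^s$ exactly when $s\in\{i,i+1\}$ and likewise reconcile the indexing so the two vertices associated to ``piece $i$'' in the sense of \eqref{eqn:annulus-log-form} are $\lambda_{2i-1},\lambda_{2i}$ sharing the pair $\{h^{i-1},h^i\}$, matching the displayed formula after a relabeling shift. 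Grouping the $2d$ terms into $d$ pairs $(\lambda_{2i-1}+\lambda_{2i})$ with common coefficient $\min\{h^{i-1}_k,h^i_k\}$ (resp.\ $\max$) yields exactly \eqref{eqn:annulus-log-form}.

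The main obstacle is the index bookkeeping in the last step: getting the cyclic offsets right so that the per-vertex $\min/\max$ over $\{s: v\in T^s\}$ collapses cleanly into the stated pairwise $\min\{h^{i-1}_k,h^i_k\}$ over consecutive encoding vectors, including correctly handling the wraparound via $h^0\equiv h^d$ and $v^0\equiv v^{2d}$, $v^{-1}\equiv v^{2d-1}$. Everything else (hypothesis checking, identification of $C$ with $\{\pm\mathbf{e}^k\}$, and the substitution $b^k=\mathbf{e}^k$) is routine once the analogous facts for the Gray code from Proposition~\ref{prop:pwl} are invoked.
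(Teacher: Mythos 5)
Your proposal is correct and follows essentially the same route as the paper's proof: apply Theorem~\ref{thm:general-cdc-characterization} after observing that $D = \{(i,i+1)\}_{i=1}^{d-1} \cup (1,d)$, that the cyclic Gray-code structure gives $C = \{\pm\mathbf{e}^k\}_{k=1}^r$, and hence that the spanned hyperplanes are exactly the coordinate hyperplanes with normals $\{\mathbf{e}^k\}_{k=1}^r$. The additional hypothesis-checking and the explicit pairing of $\lambda_{2i-1},\lambda_{2i}$ that you spell out are details the paper leaves implicit, not a different argument.
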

\proof{}
    The result follows from Theorem~\ref{thm:general-cdc-characterization} after observing that $D = \{(i,i+1)\}_{i=1}^{d-1} \cup (1,d)$ and therefore that $C = \{\pm{\bf e}^k\}_{k=1}^r$, as the binary reflected Gray encoding is cyclic (i.e. $h^{d}-h^1 = {\bf e}^1$).
\endproof

We can also apply Theorem~\ref{thm:general-cdc-characterization} using the zig-zag encoding to produce a MIP formulation for the annulus with $\lceil \log_2(d) \rceil$ integer variables and $\scrO(\log^2(d))$ general inequality constraints. Despite the modest increase in the number of constraints, the analysis of Huchette and Vielma~\cite{Huchette:2017} shows that the the zig-zag encoding enjoys the ``incremental branching'' behavior for univariate piecewise linear functions, leading to improved computational performance relative to the logarithmic formulation of Vielma et al.~\cite{Vielma:2010,Vielma:2009a}. Therefore, it may be the case that the zig-zag formulation for the annulus similarly outperforms the logarithmic formulation \eqref{eqn:annulus-log-form}, despite the modest increase in the number of constraints.

\begin{proposition} \label{prop:zig-zag-annulus}
    Fix $d = 2^r$ for some $r \in \bbN$. Take the zig-zag encoding $\HZZ_d = (h^i)_{i=1}^d$, along with $h^0 \equiv h^d$ for notational convenience. Then $(\lambda,z) \in Q(\calT,\HZZ_d)$ if and only if
    \begin{align*}
        \sum_{i=1}^d \min\{h^{i-1}_k,h^i_k\} (\lambda_{2i-1} + \lambda_{2i}) &\leq z_k \quad \forall k \in \llbracket r \rrbracket \\
        \sum_{i=1}^d \max\{h^{i-1}_k,h^i_k\} (\lambda_{2i-1} + \lambda_{2i}) &\geq z_k \quad \forall k \in \llbracket r \rrbracket \\
        \sum_{i=1}^d \min\left\{\frac{h^{i-1}_k}{2^{\ell}}-\frac{h^{i-1}_\ell}{2^{k}},\frac{h^{i}_k}{2^{\ell}}-\frac{h^{i}_\ell}{2^{k}}\right\} (\lambda_{2i-1} + \lambda_{2i}) &\leq \frac{z_k}{2^\ell} - \frac{z_\ell}{2^k} \quad \forall \{k,\ell\} \in [r]^2 \\
        \sum_{i=1}^d \max\left\{\frac{h^{i-1}_k}{2^{\ell}}-\frac{h^{i-1}_\ell}{2^{k}},\frac{h^{i}_k}{2^{\ell}}-\frac{h^{i}_\ell}{2^{k}}\right\} (\lambda_{2i-1} + \lambda_{2i}) &\geq \frac{z_k}{2^\ell} - \frac{z_\ell}{2^k} \quad \forall \{k,\ell\} \in [r]^2 \\
        (\lambda,z) &\in \Delta^{2d} \times \bbR^r.
    \end{align*}
\end{proposition}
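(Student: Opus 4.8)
The plan is to apply Theorem~\ref{thm:general-cdc-characterization} to $\calT = (T^i)_{i=1}^d$ and the encoding $H = \HZZ_d$, following the same route as the proof of Proposition~\ref{prop:log-annulus} and then identifying the spanning hyperplanes induced by the zig-zag difference directions. First I would record the combinatorial data. Since consecutive quadrilaterals $S^i$ and $S^{i+1}$ share a spoke, and $S^1$ and $S^d$ share the spoke at angle $2\pi$, while (because $d = 2^r$) no two non-adjacent alternatives intersect, the intersection digraph is the cycle $D = \{(i,i+1)\}_{i=1}^{d-1} \cup \{(1,d)\}$. In particular $G = (\sidx{d}, D)$ is connected, so Proposition~\ref{prop:sufficient-condition} yields $\dim(\Span(C)) = \dim(H)$ and Theorem~\ref{thm:general-cdc-characterization} applies (recall that $\HZZ_d$ is in convex position); moreover $C = \{h^{i+1} - h^i\}_{i=1}^{d-1} \cup \{h^d - h^1\}$.

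Next I would compute $C$ explicitly from the recursive definition of $\HZZ_d$. A short induction on $s$ shows that the set of consecutive row differences of $C^s$ is exactly $\{{\bf e}^1, \ldots, {\bf e}^s\}$: in passing from $C^s$ to $C^{s+1}$, the differences inside each of the two blocks are those of $C^s$ with a $0$ appended in the new coordinate, and the single transition between the blocks contributes ${\bf e}^{s+1}$ because the first and last rows of $C^s$ agree outside the coordinates introduced in the last step. Similarly, $C^{s+1}_{2^{s+1}}$ equals $2\,C^s_{2^s}$ in its first $s$ coordinates and $1$ in the last, so $h^d - h^1 = C^r_{2^r} = w$, where $w_k = 2^{r-k}$. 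Hence $C = \{{\bf e}^1, \ldots, {\bf e}^r\} \cup \{w\}$, which gives $\calL = \Span(C) = \bbR^r$ and therefore $\aff(H) = \bbR^r$ (as $\dim(H) = r$), so \eqref{eqn:general-V-formulation-2} reads $(\lambda, z) \in \Delta^{2d} \times \bbR^r$.

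The heart of the argument is enumerating the spanning hyperplanes of $C$ in $\bbR^r$. Since $C$ has $r+1$ vectors, every spanning hyperplane is the span of some $(r-1)$-element subset of $C$, and each such subset is linearly independent: a subset of the standard basis trivially, and a subset consisting of $w$ together with $r-2$ basis vectors because $w$ has a nonzero entry in each of the two omitted coordinates. Since moreover each of these $\binom{r+1}{2}$ hyperplanes contains exactly $r - 1$ vectors of $C$, they are pairwise distinct, so they are precisely the spanning hyperplanes. They come in two types. Omitting $w$ and ${\bf e}^k$ gives the coordinate hyperplane $\{y : y_k = 0\}$ with normal $b = {\bf e}^k$; omitting ${\bf e}^k$ and ${\bf e}^\ell$ (for $\{k,\ell\} \in [r]^2$) gives the hyperplane whose normal is supported on coordinates $k$ and $\ell$ and is orthogonal to $w$, namely $b = \tfrac{1}{2^\ell}{\bf e}^k - \tfrac{1}{2^k}{\bf e}^\ell$ up to scaling (indeed $\tfrac{1}{2^\ell} w_k - \tfrac{1}{2^k} w_\ell = 2^{r-k-\ell} - 2^{r-k-\ell} = 0$). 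So $\{b^k\}_{k=1}^\Gamma = \{{\bf e}^k\}_{k=1}^r \cup \{\tfrac{1}{2^\ell}{\bf e}^k - \tfrac{1}{2^k}{\bf e}^\ell\}_{\{k,\ell\} \in [r]^2}$ and $\Gamma = r + \binom{r}{2} = \scrO(\log^2 d)$.

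Finally I would substitute these normals into \eqref{eqn:general-V-formulation}. For each spoke index $i$, the two ground-set elements $2i-1$ and $2i$ belong to exactly the same two alternatives, whose indices are consecutive modulo $d$, so the sum $\sum_v \min_{s : v \in T^s}\{b \cdot h^s\}\lambda_v$ collapses to $\sum_{i=1}^d \min\{b \cdot h^{i-1}, b \cdot h^i\}(\lambda_{2i-1} + \lambda_{2i})$ (with $h^0 \equiv h^d$), and symmetrically for the upper bound. Taking $b = {\bf e}^k$ gives the first two families of constraints, using $b \cdot h^s = h^s_k$ and $b \cdot z = z_k$; taking $b = \tfrac{1}{2^\ell}{\bf e}^k - \tfrac{1}{2^k}{\bf e}^\ell$ gives the last two, using $b \cdot h^s = \tfrac{h^s_k}{2^\ell} - \tfrac{h^s_\ell}{2^k}$ and $b \cdot z = \tfrac{z_k}{2^\ell} - \tfrac{z_\ell}{2^k}$. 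The only real obstacle is the bookkeeping in the two middle steps — the induction that pins down the zig-zag differences, and the verification that the listed $(r-1)$-subsets exhaust the spanning hyperplanes with no duplicates — since once $C$ and its spanning hyperplanes are in hand, the conclusion is a direct specialization of Theorem~\ref{thm:general-cdc-characterization}.
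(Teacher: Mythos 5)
Your proposal is correct and follows essentially the same route as the paper's proof: identify the cyclic intersection digraph, compute $C = \{{\bf e}^k\}_{k=1}^r \cup \{w\}$, enumerate the spanning hyperplanes via $(r-1)$-element subsets of $C$ to obtain the normals $\{{\bf e}^k\}_{k=1}^r \cup \{2^{-\ell}{\bf e}^k - 2^{-k}{\bf e}^\ell\}_{\{k,\ell\}\in[r]^2}$, and specialize Theorem~\ref{thm:general-cdc-characterization}. You simply spell out more of the routine details (the induction pinning down the zig-zag consecutive differences, the distinctness of the hyperplanes, and the final substitution) that the paper leaves implicit.
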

\proof{}
    The result follows from Theorem~\ref{thm:general-cdc-characterization}. As $D = \{(i,i+1)\}_{i=1}^{d-1} \cup (1,d)$, it follows that $C = \{{\bf e}^k\}_{k=1}^r \cup \{w \equiv (2^{r-1},2^{r-2},\ldots,2^0)\}$. We have that $B = \{{\bf e}^k\}_{k=1}^r$ induce all hyperplanes spanned by the vectors $C \backslash \{w\} = \{{\bf e}^k\}_{k=1}^r$. Now consider each hyperplane spanned by $\hat{C} = \{{\bf e}^k\}_{k \in I} \cup \{w\} \subset C$, where $I \subseteq \llbracket r \rrbracket$. As $|C| = r+1$ and $\dim(\Span(C)) = r$, we must have $|I| = r-2$, i.e. that there are distinct indices $k,\ell \in \llbracket r \rrbracket \backslash I$ where $I \cup \{k,\ell\} = \llbracket r \rrbracket$. We may then compute that the corresponding hyperplane is given by the normal direction $b^{k,\ell} \defeq 2^{-\ell}{\bf e}^{k} - 2^{-k}{\bf e}^{\ell}$. Therefore, we have that the set $B = \{{\bf e}^k\}_{k=1}^r \cup \{b^{k,\ell}\}_{\{k,\ell\} \in [r]^2}$ suffices for the conditions of Theorem~\ref{thm:general-cdc-characterization}, giving the result.
\endproof

\begin{appendix}

\section{Proof of Theorem~\ref{thm:general-cdc-characterization}} \label{app:proof}
\proof{}
    We start by taking $B = \ext(Q(\calT,H)) = \Set{({\bf e}^w,h^j) | w \in T^j}$ as the set of all extreme points of $Q(\calT,H)$. It is straightforward to show that each $(\lambda,z) \in Q(\calT,H)$ satisfies \eqref{eqn:general-V-formulation}. Take some $(\hat{\lambda},\hat{z}) = ({\bf e}^w,h^j) \in B$. Clearly \eqref{eqn:general-V-formulation-2} is satisfied, and we have that for each $k \in \llbracket \Gamma \rrbracket$,
    \[
        \sum\nolimits_{v=1}^n \min_{s : v \in T^s}\{b^k \cdot h^s\} \hat{\lambda}_v = \min_{s : w \in T^s}\{b^k \cdot h^s\} \leq b^k \cdot h^j = b^k \cdot \hat{z},
    \]
    where the inequality follows as $w \in T^j$. An identical argument follows for the other inequality in \eqref{eqn:general-V-formulation-1}. This implies that each $(\lambda,z) \in Q(\calT,H)$ satisfies \eqref{eqn:general-V-formulation}, giving one direction of the characterization.

    For the other direction, let $F$ be a facet of $Q(\calT,H)$. By possibly adding or subtracting multiples of $\sum_{i=1}^n \lambda_i = 1$ (an equation appearing in the definition of $\Delta^n$) and the equations defining $\aff(H)$, we may assume w.l.o.g. that $F$ is induced by $\tilde{a} \cdot \lambda \leq \tilde{b} \cdot y$ for some $(\tilde{a},\tilde{b}) \in \bbR^{n+r}$.  As $F$ is a facet, it is supported by some strict nonempty subset of extreme points $\tilde{B} \subsetneq B$. Take $\tilde{D} = \Set{(i,j) \in D | \exists v \in \llbracket n \rrbracket \text{ s.t. } ({\bf e}^v,h^i),({\bf e}^v,h^j) \in \tilde{B}}$ and $\tilde{C} = \Set{c^{i,j} \in C | (i,j) \in \tilde{D}}$. In particular, we see that $\tilde{b} \cdot c^{i,j} = 0$ for each $c^{i,j} \in \tilde{C}$, as if $(i,j) \in \tilde{D}$, this implies that there is some $v \in \llbracket n \rrbracket$ whereby $\tilde{a} \cdot {\bf e}^v = \tilde{b} \cdot h^i = \tilde{b} \cdot h^j$.

    As $F$ is a proper face, we know that there is some point in $B$ not supporting $F$, w.l.o.g. $({\bf e}^1,h^1) \in B \backslash \tilde{B}$. We will take all the remaining extreme points not in $\tilde{B}$ as $B^\star = B \backslash (\tilde{B} \cup \{({\bf e}^1,h^1)\})$. Consider three cases for the dimension of $\Span(\tilde{C})$ which exhaust all possibilities.

    \paragraph{\underline{Case 1: $\dim(\Span(\tilde{C})) = \dim(\Span(C))$}}
    In this case, we show that $F$ corresponds to a variable bound on a single component of $\lambda$. As $\tilde{C} \subseteq C$ and $\dim(\Span(\tilde{C})) = \dim(\Span(C))$, we conclude that $\Span(\tilde{C}) = \Span(C) \equiv \calL$. Then $\tilde{b} \in \calL^\perp$, as $\tilde{b} \perp \tilde{C}$. Furthermore, $\calL$ is the linear space parallel to $\aff(H)$. Therefore, we can w.l.o.g. presume that $\tilde{b} = {\bf 0}^r$, as \eqref{eqn:general-V-formulation-2} constrains $z \in \aff(H)$.

    We observe that $\tilde{a} \neq {\bf 0}^n$, as otherwise this would correspond to the vacuous inequality $0 \leq 0$, which is not a proper face. We now show that $\tilde{a}$ has exactly one nonzero element. Assume for contradiction that this is not the case, and $\tilde{a}_p, \tilde{a}_q < 0$ for some distinct $p,q\in\sidx{n}$ (any strictly positive components will not yield a valid inequality for $B$). This implies that there exists some $j, k \in \llbracket d \rrbracket$ such that at least one of $({\bf e}^p,h^{j})$ and $({\bf e}^q,h^{k})$ is not contained in $B^\star$; assume w.l.o.g. that $({\bf e}^p,h^{j}) \not\in B^\star$. However, we could then perform the simple tilting $\tilde{a}_q \leftarrow 0$ to construct a distinct face with strictly larger support, as now $({\bf e}^q,h^j)$ is supported by the corresponding face for each $j$ such that $q \in T^j$. Furthermore, as this new constraint does not support $({\bf e}^p,h^j)$ for each $j$ such that $p \in T^j$, the new face is proper, and thus contradicts the original face $F$ being a facet. Therefore, we can normalize the coefficients to $\tilde{a} = -{\bf e}^p$, giving a variable bound constraint on a component of $\lambda$ which appears in the restriction $\lambda \in \Delta^n$ in \eqref{eqn:general-V-formulation-2}.

    \paragraph{\underline{Case 2: $\dim(\Span(\tilde{C})) = \dim(\Span(C)) - 1$}}
    The fact that $\tilde{b} \perp \tilde{C}$, along with the dimensionality of $\tilde{C}$, implies that $M(\tilde{b};\calL) = \Span(\tilde{C})$ is a hyperplane in $\calL$. This means we can assume w.l.o.g. that $\tilde{b} = s b^k$ for some $k \in \llbracket \Gamma \rrbracket$ and $s \in \{-1,+1\}$. We then compute for each $v \in \llbracket n \rrbracket$ that either $a_v = \min_{j : v \in T^j}\{b^k \cdot h^j\}$ if $s=+1$, or $a_v = -\max_{j : v \in T^j}\{b^k \cdot h^j\}$ if $s = -1$. Therefore, this facet is included in \eqref{eqn:general-V-formulation-1}.
    \begin{figure}[htpb]
        \centering
        \begin{tikzpicture}[scale=0.70]
            \draw [fill=gray!50,draw=none] (-1,1) -- (1,-1) -- (5,-1) -- (5,4) -- (-1,4) -- (-1,1);
            \draw [fill=gray!80,draw=none] (0,0) -- (1,4) -- (5,4) -- (5,1) -- (0,0);
            \draw [->] (0,0) -- (1,4);
            \draw [->] (0,0) -- (5,1);
            \draw [line width=1.5] (-1,-1) -- (4,4);
            \draw [fill=black] (2,2) circle [radius=0.1];
            \node [right] at (2,2) {\footnotesize $(a,b)$};
            \node [above left] at (5,1) {\footnotesize $K^2$};
            \node [above left] at (0,-1) {\footnotesize $L$};
            \node [above right] at (1,-1) {\footnotesize $\hat{K}^2$};
            \draw [line width=1.1, dashed] (-1,1) -- (1,-1);
        \end{tikzpicture}\hfill
        \begin{tikzpicture}[scale=0.70]
            \draw [fill=gray!20,draw=none] (-1,-1) -- (-1,4) -- (5,4) -- (5,-1) -- (-1,-1);
            \draw [fill=gray!50,draw=none] (-1,1) -- (1,-1) -- (5,-1) -- (5,4) -- (-1,4) -- (-1,1);
            \draw [fill=gray!80,draw=none] (0,0) -- (1,4) -- (5,4) -- (5,1) -- (0,0);
            \draw [->] (0,0) -- (1,4);
            \draw [->] (0,0) -- (5,1);
            \draw [fill=black] (2,2) circle [radius=0.1];
            \node [right] at (2,2) {\footnotesize $(a,b)$};
            \node [above left] at (5,1) {\footnotesize $K^2$};
            \node [above left] at (0,-1) {\footnotesize $L$};
            \node [above right] at (1,-1) {\footnotesize $\hat{K}^2$};
            \node [left] at (0.75,3) {\footnotesize
            $(\tilde{a},\tilde{b})$};
            \draw [fill=black] (0.75,3) circle [radius=0.1];
            \draw [->, dotted,line width=1.25] (2,2) -- (0.75,3);
            \draw [line width=1.1, dashed] (-1,1) -- (1,-1);
        \end{tikzpicture}
        \caption{\textbf{(Left)} $\dim(L)=1$, and so we cannot ``tilt'' the inequality to increase its support. \textbf{(Right)} $\dim(\Span(C)) = \dim(H)$ implies $\dim(L) > 1$ (i.e. all of $\bbR^2$ in this projected space), and so we can tilt.}
        \label{fig:proof}
    \end{figure}
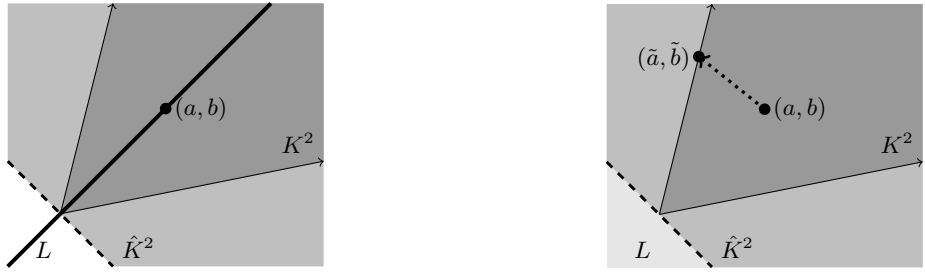
    \paragraph{\underline{Case 3: $\dim(\Span(\tilde{C})) < \dim(\Span(C))-1$}}
    We will show that this case cannot occur if $F$ is a general inequality facet. In fact, observe that if ${\bf e}^i \not\in \Proj_\lambda(\tilde{B})$, then $\tilde{a} \cdot \lambda \leq \tilde{b} \cdot z$ is either equivalent to, or dominated by, the variable bound $\lambda_i \geq 0$. Therefore, we assume that $\Proj_\lambda(\tilde{B}) = \{{\bf e}^i\}_{i=1}^n$ for the remainder.

    Presume for contradiction that it is indeed the case that $F$ is a facet and $\dim(\Span(\tilde{C})) < \dim(\Span(C))-1$. 
    First, we show that $B^\star \neq \emptyset$. If this were not the case, then $\tilde{B} = B \backslash \{({\bf e}^1,h^1)\}$, and hence for each $(i,j) \in D \backslash \tilde{D}$, it must be that $i=1$ and $T^1 \cap T^j = \{1\}$. Then there exists $J\subseteq \{2,\ldots,d\}$ such that $\tilde{C} = C\setminus \Set{c^{1,j} | j\in J}$ and $C_J \defeq \Set{c^{i,j} | i,j\in J, i<j} \subseteq C$. Without loss of generality we may assume that $J=\set{2,\ldots,k}$. Then by noting that $c^{1,j} \equiv h^j-h^1=\sum_{i=2}^{j} c^{i-1,i}=\sum_{i=2}^{j} h^{i}-h^{i-1}$, we conclude that $\dim\left(C_J\setminus \Set{c^{1,j} | j\in J}\right) \geq \dim(C_J) - 1$, which contradicts $\dim(\Span(\tilde{C})) < \dim(\Span(C))-1$. Therefore, we conclude that $B^\star \neq \emptyset$.

    We now define the cone $K = \Set{(a,b) \in \bbR^n \times \calL | a \cdot {\bf e}^v \leq b \cdot h^j \: \forall ({\bf e}^v,h^j) \in B^\star }$
    and the linear space $L = \Set{ (a,b) \in \bbR^n \times \calL | a \cdot {\bf e}^v = b \cdot h^j \: \forall ({\bf e}^v,h^j) \in \tilde{B} }$.

    By definition of $ B^\star$, $(\tilde{a},\tilde{b})\in \relint(K)$ (taken relative to $\bbR^n \times \calL$).
    Next, we show that $\dim(L) > 1$. To show this, we start by instead studying $L' = \Set{ b \in \calL | b \cdot c = 0 \: \forall c \in \tilde{C} }$. We can readily observe that $L' = \Proj_b(L)$. Furthermore, as $\Proj_a(\tilde{B}) = \{{\bf e}^i\}_{i=1}^n$ from the argument at the beginning of the case, we conclude that the set $\Set{a | (a,b) \in L}$ is a singleton. In other words, the values for $a$ are completely determined by the values for $b$ in $L$. From this, we conclude that $\dim(L) = \dim(L')$. From the definition of $L'$, we see that $L'$ and $\Span(\tilde{C})$ form an orthogonal decomposition of $\calL$. Therefore, $\dim(\calL) = \dim(L') + \dim(\Span(\tilde{C}))$. Recalling that $\dim(\calL) = \dim(\Span(C))$, and that we are assuming that $\dim(\Span(\tilde{C})) < \dim(\Span(C))-1$, we have that $\dim(L) = \dim(L') = \dim(\calL) - \dim(\Span(\tilde{C})) = \dim(\Span(C)) - \dim(\Span(\tilde{C})) > 1$, giving the result.

    We now show that $K \cap L$ is pointed. To see this, presume for contradiction that there exists a nonzero $(\hat{a},\hat{b})$ such that $(\hat{a},\hat{b}),(-\hat{a},-\hat{b}) \in K\cap L$. However, this would imply that $\hat{a} \cdot {\bf e}^v = \hat{b} \cdot h^j$ for all $({\bf e}^v,h^j) \in \tilde{B} \cup B^\star$. Because $B^\star\neq\emptyset$, this implies that $\hat{a} \cdot \lambda \leq \hat{b} \cdot z$ is a face strictly containing the facet $F$, and so must be a non-proper face (i.e. it is additionally supported by $({\bf e}^1,h^1)$ and hence by every point in $B$). However, this would imply that $\hat{b} \cdot c = 0$ for all $c \in C$, and as $\calL = \Span(C)$, this would necessitate that $\hat{b} \in \calL^\perp$. As $\hat{b} \in \calL$ from the definition of $K$, it follows that $\hat{b} = {\bf 0}^r$. However, this would imply that $\hat{a} \cdot \lambda = 0$ is valid for $B$, which cannot be the case unless $\hat{a} = {\bf 0}^n$, a contradiction. Therefore, $K \cap L$ is pointed.

    As $\dim(L) > 1$, we can take some two-dimensional linear subspace $L^2 \subseteq L$ such that $(\tilde{a},\tilde{b}) \in L^2$. As $(\tilde{a},\tilde{b}) \in L \cap \relint(K)$, it follows that $(\tilde{a},\tilde{b}) \in L^2 \cap \relint(K)$ as well. Similarly, as $K \cap L$ is pointed, it follows that $K^2 \defeq L^2 \cap K$ is pointed as well. Furthermore, as $K$ is full-dimensional in $\bbR^n \times \calL$, $K^2$ is full-dimensional in $L^2 \subset \bbR^n \times \calL$ (i.e. 2-dimensional).Therefore, a minimal description for it includes the equalities that define $L^2$, along with exactly two nonempty-face-inducing inequality constraints from the definition of $K$. See Figure~\ref{fig:proof} for an illustration of the following argument in this space.

    Add the single strict inequality $\hat{K}^2 \defeq K^2 \cap \Set{(a,b) \in \bbR^n \times \calL | a \cdot {\bf e}^1 < b \cdot h^1}$. As $\tilde{a} \cdot {\bf e}^1 < \tilde{b} \cdot h^1$ and $(\tilde{a},\tilde{b}) \in K^2$, it follows that $\hat{K}^2$ is nonempty and also 2-dimensional, and can be described using only the linear equations defining $L^2$, the strict inequality $a \cdot {\bf e}^1 < b \cdot h^1$, and at least one (and potentially two) of the inequalities previously used to describe $K^2$. Select one of the defining nonempty-face-inducing inequalities given by $a \cdot {\bf e}^v \leq b \cdot h^j$, where $({\bf e}^v,h^j) \in B^\star$.

    Now construct the restriction $S \defeq \Set{(a,b) \in \hat{K}^2 | a \cdot {\bf e}^v = b \cdot h^j}$. As $a \cdot {\bf e}^v \leq b \cdot h^j$ induces a non-empty face on the cone $\hat{K}^2$, $S$ is nonempty. Furthermore, we see that any $(\hat{a},\hat{b}) \in S$ will correspond to a valid inequality $\hat{a} \cdot \lambda \leq \hat{b} \cdot z$ for $B$ with strictly greater support than our original face $\tilde{a} \cdot \lambda \leq \tilde{b} \cdot z$. In particular, we see that $({\bf e}^v,h^j) \in B^\star$, i.e. $\tilde{a} \cdot {\bf e}^v < \tilde{b} \cdot h^j$, but by construction $\hat{a} \cdot {\bf e}^v = \hat{b} \cdot h^j$. Additionally, since $\hat{a} \cdot {\bf e}^1 < \hat{b} \cdot h^1$, the corresponding face is proper, which implies that $F$ cannot be a facet, a contradiction.
\endproof

\section{Proof of Proposition~\ref{prop:sufficient-condition}} \label{app:sufficient-condition}
\proof{}
The result follows by showing that $\calL = \aff(H) - h^1$. For simplicity, take $\breve{D} = \Set{ (i,j) \in [d]^2 | T^i \cap T^j \neq \emptyset}$ as the version of $D$ with all parallel edges added.
To show that $\Span(C) \subseteq \aff(H) - h^1$, take some $z \in \Span(C)$, and so there exist multipliers $\gamma_{i,j}$ such that $z = \sum_{(i,j) \in \breve{D}} \gamma_{i,j} (h^i-h^j) = \sum_{i=1}^d \alpha_i h^i$, where $\alpha_i = \sum_{j : (i,j) \in \breve{D}}\gamma_{i,j} - \sum_{k : (k,i) \in \breve{D}}\gamma_{k,i}$. Then $z = (\alpha_1+1) h^1 + (\sum_{i=2}^d \alpha_i h^i) - h^1$, i.e. $z \in \aff(H) - h^1$, as $(\alpha_1+1) + \sum_{i=2}^d \alpha_i = 1 + \sum_{i=1}^d \left( \sum_{j : (i,j) \in \breve{D}} \gamma_{i,j} - \sum_{k : (k,i) \in \breve{D}} \gamma_{k,i}\right) = 1 + 0$, and so they form affine multipliers.

To show that $\Span(C) \supseteq \aff(H) - h^1$, take some $z \in \aff(H)-h^1$, and so there exists multipliers $\mu_{i}$ such that $z = (\sum_{i=1}^d \mu_i h^i) - h^1$ and $\sum_{i=1}^d \mu_i = 1$. As $G$ is connected, there exists some closed directed path $(t_1 \equiv 1, t_2, \ldots, t_{r},t_{r+1}\equiv 1)$ on $G$ that traverses each node. Take $\alpha_i \defeq -\frac{\mu_i}{\# \text{ of times path traverses } i}$ for each $i \in \llbracket d \rrbracket$. Then $z = \sum_{k=1}^r (h^{t_{k+1}}-h^{t_{k}}) \sum_{\ell=1}^k \alpha_{t_\ell} = \sum_{k=1}^r c^{t_k,t_{k+1}}\sum_{\ell=1}^k \alpha_{t_\ell}$ (using $\sum_{i=1}^d \mu_i = 1$ to show that the $\ell=r$ term in the sum produces the desired $-h^1$ term), and so therefore $z \in \Span(C)$, as each $(t_k,t_{k+1}) \in \breve{D}$. This shows the result. Additionally, we note that the choice of $h^1$ to subtract from $\aff(H)$ was arbitrary.
\endproof

\end{appendix}

\bibliographystyle{elsarticle-num}
\bibliography{master.bib}

\begin{thebibliography}{10}
\expandafter\ifx\csname url\endcsname\relax
  \def\url#1{\texttt{#1}}\fi
\expandafter\ifx\csname urlprefix\endcsname\relax\def\urlprefix{URL }\fi
\expandafter\ifx\csname href\endcsname\relax
  \def\href#1#2{#2} \def\path#1{#1}\fi

\bibitem{Geisler:2012}
B.~Gei{\ss}ler, A.~Martin, A.~Morsi, L.~Schewe, Using piecewise linear
  functions for solving {MINLP}s, Springer, 2012, pp. 287--314.

\bibitem{Misener:2012}
R.~Misener, C.~Floudas, Global optimization of mixed-integer
  quadratically-constrained quadratic programs ({MIQCQP}) through
  piecewise-linear and edge-concave relaxations, Mathematical Programming
  136~(1) (2012) 155--182.

\bibitem{Bergamini:2005}
M.~L. Bergamini, P.~Aguirre, I.~Grossmann, Logic-based outer approximation for
  globally optimal synthesis of process networks, Computers and Chemical
  Engineering 29~(9) (2005) 1914--1933.

\bibitem{Trespalacios:2014b}
F.~Trespalacios, I.~Grossmann, Review of mixed-integer nonlinear and
  generalized disjunctive programming methods, Chemie Ingenieur Technik 86~(7)
  (2014) 991--1012.

\bibitem{Bixby:2007}
R.~Bixby, E.~Rothberg, Progress in computational mixed integer
  programming---{A} look back from the other side of the tipping point, Annals
  of Operations Research 149 (2007) 37--41.

\bibitem{Junger:2010}
M.~J{\"u}nger, T.~Liebling, D.~Naddef, G.~Nemhauser, W.~Pulleyblank,
  G.~Reinelt, G.~Rinaldi, L.~Wolsey, 50 years of integer programming 1958-2008,
  Springer, 2010.

\bibitem{Vielma:2015}
J.~P. Vielma, Mixed integer linear programming formulation techniques, SIAM
  Review 57~(1) (2015) 3--57.

\bibitem{Huchette:2016a}
J.~Huchette, J.~P. Vielma, A combinatorial approach for small and strong
  formulations of disjunctive constraints, Mathematics of Operations Research
  44~(3) (2019) 767--1144.

\bibitem{Jeroslow:1984}
R.~Jeroslow, J.~Lowe, Modelling with integer variables, Mathematical
  Programming Study 22 (1984) 167--184.

\bibitem{Vielma:2018}
J.~P. Vielma, Small and strong formulations for unions of convex sets from the
  {C}ayley embedding, Mathematical Programming (2018).

\bibitem{Beale:1970}
E.~M.~L. Beale, J.~A. Tomlin, Special facilities in a general mathematical
  programming system for non-convex problems using ordered sets of variables,
  in: J.~Lawrence (Ed.), OR 69: Proceedings of the Fifth International
  Conference on Operational Research, Tavistock Publications, 1970, pp.
  447--454.

\bibitem{Huchette:2017}
J.~Huchette, J.~P. Vielma, Nonconvex piecewise linear functions: Advanced
  formulations and simple modeling tools,
  \url{https://arxiv.org/abs/1708.00050} (2017).

\bibitem{Vielma:2010}
J.~P. Vielma, S.~Ahmed, G.~Nemhauser, Mixed-integer models for nonseparable
  piecewise-linear optimization: {U}nifying framework and extensions,
  Operations Research 58~(2) (2010) 303--315.

\bibitem{Vielma:2009a}
J.~P. Vielma, G.~Nemhauser, Modeling disjunctive constraints with a logarithmic
  number of binary variables and constraints, Mathematical Programming
  128~(1-2) (2011) 49--72.

\bibitem{Vielma:2016}
J.~P. Vielma, Embedding formulations and complexity for unions of polyhedra,
  Management Science 64~(10) (2018) 4471--4965.

\bibitem{Kocuk:2015}
B.~Kocuk, S.~S. Dey, X.~A. Sun, Strong {SOCP} relaxations for optimal power
  flow, Operations Research 64~(6) (2016) 1177--1196.

\bibitem{Jabr:2006}
R.~A. Jabr, Radial distribution load flow using conic programming, IEEE
  Transactions on Power Systems 21~(3) (2006) 1458--1459.

\bibitem{Liu:2017}
J.~Liu, M.~Bynum, A.~Castillo, J.-P. Watson, C.~D. Laird, A multitree approach
  for global solution of {ACOPF} problems using piecewise outer approximations,
  Computers and Chemical Engineering (2017).

\bibitem{Cui:2017}
B.~Cui, X.~A. Sun, A new voltage stability-constrained optimal power flow
  model: {S}ufficient condition, {SOCP} representation, and relaxation, IEEE
  Transactions on Power Systems (2017).

\bibitem{Foster:2013}
J.~D. Foster, Mixed-integer quadratically-constrained programming,
  piecewise-linear approximation and error analysis with applications in power
  flow, Ph.D. thesis, The University of Newcastle, Australia (November 2013).

\bibitem{Deits:2014}
R.~Deits, R.~Tedrake, Footstep planning on uneven terrain with mixed-integer
  convex optimization, in: 2014 14th IEEE-RAS International Conference on
  Humanoid Robots (Humanoids), IEEE, 2014, pp. 279--286.

\bibitem{Kuindersma:2016}
S.~Kuindersma, R.~Deits, M.~Fallon, A.~Valenzuela, H.~Dai, F.~Permenter,
  T.~Koolen, P.~Marion, R.~Tedrake, Optimization-based locomotion planning,
  estimation, and control design for the atlas humanoid robot, Autonomous
  Robots 40~(3) (2016) 429--455.

\bibitem{Lubin:2017}
M.~Lubin, I.~Zadik, J.~P. Vielma, Mixed-integer convex representability, in:
  F.~Eisenbrand (Ed.), International Conference on Integer Programming and
  Combinatorial Optimization, Springer International Publishing, Cham, 2017,
  pp. 392--404.

\bibitem{Lubin:2017a}
M.~Lubin, I.~Zadik, J.~P. Vielma, Regularity in mixed-integer convex
  representability, \url{https://arxiv.org/abs/1706.05135} (2017).

\end{thebibliography}

\end{document}